\numberwithin{equation}{section}
\newtheorem{Theorem}{Theorem}[section]
\newtheorem{Lemma}[Theorem]{Lemma}
\newtheorem*{CCL}{Cauchy's Combinatorial Lemma}
\newtheorem*{MainTheorem}{Main Theorem}
\theoremstyle{definition}
\newtheorem*{Remark}{Remark}
\title{Almost all circle polyhedra are rigid}
\author{John C. Bowers}
\address{Department of Computer Science, James Madison University, 
Harrisonburg VA 22807}
\email{bowersjc@jmu.edu}
\author{Philip L. Bowers}
\address{Department of Mathematics, The Florida State University, 
Tallahassee FL 32306}
\email{bowers@math.fsu.edu}
\author{Kevin Pratt}
\address{Computer Science Department, Carnegie Mellon University, Pittsburgh PA 15213}
\email{kpratt@andrew.cmu.edu}
\date{\today} 
\begin{document}

\begin{abstract}
We verify the infinitesimal inversive rigidity of almost all triangulated circle polyhedra in the Euclidean plane $\mathbb{E}^{2}$, as well as the infinitesimal inversive rigidity of tangency circle packings on the $2$-sphere $\mathbb{S}^{2}$. From this the rigidity of almost all triangulated circle polyhedra follows. The proof adapts Gluck's proof in~\cite{gluck75} of the rigidity of almost all Euclidean polyhedra to the setting of circle polyhedra, where inversive distances replace Euclidean distances and M\"obius transformations replace rigid Euclidean motions.
\end{abstract}

\maketitle

\section*{Introduction}
The infinitesimal rigidity theory of bar-and-joint frameworks traces its origins to the investigations of James Clerk Maxwell in 1864. Before Maxwell's investigations, Legendre and Cauchy investigated the global rigidity of polyhedra, culminating in the famous Cauchy Rigidity Theorem \cite{Cauchy1813} of 1813 that avers that convex, bounded polyhedra in Euclidean $3$-space $\mathbb{E}^{3}$ are globally rigid. In 1916 Max Dehn \cite{Dehn1916} generalized in one direction the Cauchy Theorem by proving the infinitesimal rigidity of strictly convex polyhedra in $\mathbb{E}^{3}$, and in the decade of the 1950's, A.D.~Alexandrov \cite{Alex2005} greatly extended Dehn's insights in his articulation of a rather broad theory of rigidity for $3$-dimensional polyhedra. Nonetheless, the question of whether or not all polyhedra, even non-convex ones, in $\mathbb{E}^{3}$ were rigid remained open. In 1975 Herman Gluck \cite{gluck75}, in the vein of Maxwell and Dehn, proved that almost all of them are infinitesimally rigid, and therefore rigid. At that time, many believed that all polyhedra in $\mathbb{E}^{3}$ were rigid, but in 1977 Robert Connelly \cite{Connelly1977} surprised the community by constructing a flexible polyhedron in $\mathbb{E}^{3}$, necessarily non-convex, and by Gluck's result, a rather rare example among polyhedra.

In \cite{BBP2018}, the authors of the present work began a study of the rigidity theory of circle frameworks, and in particular of circle polyhedra. There we showed that an analogue of the Cauchy Rigidity Theorem remains true when vertices in $\mathbb{E}^{3}$ are replaced by circles in the $2$-sphere $\mathbb{S}^{2}$ that are placed in the pattern of a Euclidean polyhedron. There global rigidity adheres when the circle polyhedron is convex and proper, and an example of Ma and Schlenker~\cite{Ma:2012hl} shows that this fails for non-convex ones, which mirrors the classical case for Euclidean polyhedra. Of course for circle polyhedra, Euclidean distance is replaced by inversive distance and rigidity is understood with respect to M\"obius transformations. In the present paper, we show that an analogue of Gluck's Theorem on the infinitesimal rigidity as well as the rigidity of almost all polyhedra holds in the setting of circle polyhedra in the plane $\mathbb{E}^{2}$. 

The proof, not surprisingly, mirrors the plan developed over the last couple of centuries by Cauchy \cite{Cauchy1813}, Dehn \cite{Dehn1916}, Alexandrov \cite{Alex2005}, and Gluck \cite{gluck75} in their investigations of rigid Euclidean polyhedra, with Gluck's paper in particular serving as a valuable guide for our development. The first task is identifying appropriate rigidity matrices and stress matrices in the setting of circle frameworks, where the trivial motions are by M\"obius transformations and inversive distance between circles is the desired preserved parameter. This is accomplished in Sections~\ref{Section:IR} and \ref{Section:IS}. With the right notions of these matrices in place, the argument proceeds as in Gluck~\cite{gluck75}. The infinitesimal rigidity of planar circle polyhedra is related to inversive stresses in Section~\ref{Section:RS}.  In the classical Euclidean development, Dehn's Theorem plays the role of showing that the collection of infinitesimally rigid polyhedra is non-empty, from which Gluck argues that the collection is dense in the space of all polyhedra. The role played by Dehn's Theorem in our development is of independent interest and appears in Section~\ref{sec:utcrigidity} where we show that univalent tangency circle packings of the $2$-sphere are infinitesimally rigid. The four preceding sections come together in the proof of the infinitesimal rigidity of almost all circle polyhedra in the final section, Section~\ref{Section:AAR}, as well as their rigidity. 

Because of the well-known, intimate connection between the hyperbolic geometry of the unit ball in $\mathbb{E}^{3}$ (the Beltrami-Klein model), the inversive geometry of its $2$-sphere ideal boundary $\mathbb{S}^{2}$, and the projective geometry of $\mathbb{RP}^{3}$ in which these models reside, (infinitesimal) rigidity results for circle polyhedra and frameworks have implications for the (infinitesimal) rigidity of hyperbolic polyhedra. These have been articulated in works of Thurston, Rivin, Hogdsen-Rivin, Bao-Bonahon, and our previous paper \cite{BBP2018}. We do not take the time to translate the results of this paper to the setting of, for example, strictly hyperideal hyperbolic polyhedra in $\mathbb{H}^{3}$ (see the final section of \cite{BBP2018}) as we are content with this mere mention of the connection.

\section{Infinitesimal Inversive Rigidity of Circle Frameworks}\label{Section:IR}

\subsection{Circle-frameworks and motions}
The general definition of a circle framework in the $2$-sphere allows for the case where the disks bounded by the circles of the framework cover the whole of the $2$-sphere.\footnote{For example, non-univalent tangency circle packings are circle frameworks, and their corresponding disks can cover the $2$-sphere multiple times.}  Our study will restrict this generality by considering only those circle frameworks that have at least one intersticial region, an open set not covered by the disks that the circles of the framework bound. This allows us to project stereographically to the plane and define the inversive rigidity matrix using the very simple Euclidean formul{\ae} for the inversive distances among the adjacent circles of the framework. Our study, then, is of circle frameworks in the plane. Nonetheless, our setting is general enough to cover all univalent circle frameworks in the $2$-sphere, the ones the interiors of whose companion disks are pairwise disjoint, as well as many with overlapping adjacent circles.

We parameterize the planar circles by their centers and radii and write $C = (x, y, r)$ when $C$ is the circle with center $(x, y)$ and radius $r$. A {\em motion} of $C$ is a continuously differentiable path $C(t) = (x(t), y(t), r(t))$ such that $C(0) = C$. Let $G$ be a connected graph on $n$ vertices labeled by the integers $1, \dots, n$ and with $m$ unoriented edges, each labeled by its pair of vertices. The set $\mathcal{C}=\{C_1, \cdots, C_n\}$ of circles in the plane indexed by the vertex set $V(G) = \{1, \dots, n\}$ is called a {\em circle framework with adjacency graph $G$}, or a \textit{\textit{c}-framework} for short, and is denoted as $G(\mathcal{C})$. Two circles $C_{i}$ and $C_{j}$ of the \textit{c}-framework $G(\mathcal{C})$ are \textit{adjacent} provided $ij$ is an edge of $G$. If all adjacent circles are tangent in $G(\mathcal{C})$, we call it a {\em tangency circle framework}, or a \textit{tc-framework}. If the closed disks in the collection $\mathcal{D} = \{ D_{1}, \dots , D_{n}\}$, where $\partial D_{i} = C_{i}$ for $i=1, \dots, n$, have pairwise disjoint interiors, then the circle framework $G(\mathcal{C})$ is said to be \textit{univalent}.

A {\em motion} of a framework $G(\mathcal{C})$ is given by a motion of each of the circles in $\mathcal{C}$ that at each time $t$ preserves inversive distances among adjacent circles:\:for all $t$ and each edge $ij\in E(G)$, the inversive distance between $C_i(t)$ and $C_j(t)$ remains constant. If there exists a smooth $1$-parameter family $M_t$ of M\"obius transformations such that $M_{0} = \text{id}_{\mathbb{R}^{2}}$ and $M_t(C_i) = C_i(t)$ for all $i\in V(G)$, then the motion is {\em trivial}; otherwise, the motion is a {\em flex} of $G(\mathcal{C})$. If there do not exist any flexes, then $G(\mathcal{C})$ is {\em rigid}, otherwise {\em flexible}. 

\subsection{Inversive rigidity matrix}
Let a motion of $G(\mathcal{C})$ be given as $$\mathcal{C}(t) = \{C_1(t) = (x_1(t), y_1(t), r_1(t)), \cdots, C_n(t) = (x_n(t), y_n(t), r_n(t))\}$$ and consider an edge $ij$. Differentiating the expression $$\text{Inv}(C_i(t), C_j(t)) = \frac{(x_{i}(t) - x_{j}(t))^2 + (y_{i}(t) - y_{j}(t))^2 - r_i(t)^2 - r_j(t)^2}{2 r_i(t) r_j(t)} = \text{constant},$$ multiplying by $-r_i^2 r_j^2$, letting $d_{ij}^2 = (x_{i} - x_{j})^2 + (y_{i} - y_{j})^2 = d_{ji}^{2}$, and evaluating at $t=0$ gives 

\begin{equation}\label{eq:infInvRigidity} 
\begin{split}
	& r_i r_j (x_{j} - x_{i}) x_i' + r_{j} r_{i} (x_{i} - x_{j}) x_j' + \\
	& r_i r_j (y_{j} - y_{i}) y_i' + r_{j} r_{i} (y_{i} - y_{j}) y_j' + \\
	& (1/2)r_j (r_i^2 + d_{ij}^2 - r_j^2) r_i' + \\
	& (1/2)r_i ( r_j^2 + d_{ji}^2 - r_i^2) r_j' = 0.
\end{split}
\end{equation}

Equation~\ref{eq:infInvRigidity} is linear in the derivatives $x_i'$, $y_i'$, $r_i'$, $x_j'$, $y_j'$, and $r_j'$. The system of linear equations over all edges $ij$ can be represented as a matrix equation $R c = \mathbf{0}$, where $R$ is an $m\times 3 n$ matrix corresponding to the coefficients of the linear system defined by Eq.~\ref{eq:infInvRigidity} and $c = (x_1', y_1', r_1', \cdots, x_n', y_n', r_n')^T$ is the column vector of derivatives. The rows are indexed by the $m$ edges and the columns by the $3n$ coordinates of the $n$ circles of the framework. When $ij$ is an edge of $G$, the row in $R$ corresponding to that edge has the entries 
\begin{equation}\label{EQ:rigiditymatrixentries}
r_i r_j (x_{j} - x_{i}), \quad r_i r_j (y_{j} - y_{i}), \quad (1/2)r_j (r_i^2 + d_{ij}^2 - r_j^2)
\end{equation}
in the three columns corresponding to the three coordinates of the circle $C_{i}$, and similarly for $C_{j}$ with $i$ and $j$ exchanged, and with all other entries of that row equal to zero. We call the matrix $R = R_{G(\mathcal{C})}$ the {\em inversive rigidity matrix} for $G(\mathcal{C})$. 

\subsection{Infinitesimal rigidity} The derivative of the motion $\mathcal{C}(t)$ at $t=0$ gives a solution vector $c$ to $R c= \mathbf{0}$. We call any vector $c$ satisfying $R c = \mathbf{0}$ an {\em infinitesimal  motion} of $G(\mathcal{C})$. Such an infinitesimal motion is {\em trivial} if it is the derivative of a trivial motion of $G(\mathcal{C})$, one by a smooth $1$-parameter family of M\"obius transformations. A non-trivial infinitesimal  motion is called an {\em infinitesimal  flex} of $G(\mathcal{C})$. If no infinitesimal  flex exists for $G(\mathcal{C})$, we say $G(\mathcal{C})$ is {\em infinitesimally  rigid}; otherwise, {\em infinitesimally  flexible}.


The \textit{space of infinitesimal motions} of $G(\mathcal{C})$ is the kernel of $R$. Since trivial infinitesimal motions are derivatives of M\"obius transformations, a $6$-dimensional Lie group, describing them requires six parameters. Thus, the dimension of the kernel is at least six and equals six if and only if the framework is infinitesimally rigid. By the rank-nullity theorem, we have the following result.

\begin{Lemma}\label{lem:invrigidityrank}
$G(\mathcal{C})$ is infinitesimally rigid if and only if the rank of the inversive rigidity matrix $R$ is equal to $3n-6$. 
\end{Lemma}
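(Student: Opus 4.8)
The plan is to reduce the statement to a dimension count via the rank--nullity theorem, once the dimension of the space of trivial infinitesimal motions has been pinned down exactly. By definition $G(\mathcal{C})$ is infinitesimally rigid precisely when every infinitesimal motion is trivial, i.e.\ when $\ker R$ coincides with the space $T$ of trivial infinitesimal motions. Since $R$ is an $m\times 3n$ matrix, rank--nullity gives $\operatorname{rank} R = 3n - \dim\ker R$, so the equivalence claimed in the Lemma will follow immediately once I establish that $\dim T = 6$: for then $\ker R = T$ iff $\dim\ker R = 6$ iff $\operatorname{rank} R = 3n-6$.

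First I would verify the inclusion $T\subseteq \ker R$. A trivial motion has the form $C_i(t) = M_t(C_i)$ for a smooth one-parameter family of M\"obius transformations with $M_0 = \mathrm{id}_{\mathbb{R}^2}$. Because M\"obius transformations preserve inversive distance, $\mathrm{Inv}(C_i(t), C_j(t))$ is constant in $t$ for every edge $ij\in E(G)$; differentiating at $t=0$ along exactly the computation that produced Eq.~\ref{eq:infInvRigidity} shows that the velocity vector $c$ of such a motion satisfies $Rc=\mathbf{0}$. Hence $T\subseteq\ker R$, and in particular $\dim\ker R \ge \dim T$.

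The heart of the matter is to show $\dim T = 6$. I would realize $T$ as the image of the six-dimensional Lie algebra $\mathcal{L}$ of the M\"obius group under the linear map that sends an infinitesimal generator $\xi\in\mathcal{L}$ to the velocity $\frac{d}{dt}\big|_{0}$ of the configuration $t\mapsto \exp(t\xi)\cdot\mathcal{C}$ it induces. Concretely, I would record the six standard generators --- the two translations, the rotation, the dilation, and the two inversive (special conformal) generators --- as explicit vector fields in the $(x,y,r)$ parameters of a circle $C=(x,y,r)$, and assemble their values at $\mathcal{C}$ into the columns of a $3n\times 6$ matrix. The assertion $\dim T = 6$ is exactly the injectivity of this map, equivalently the statement that no nonzero $\xi\in\mathcal{L}$ fixes every circle of $\mathcal{C}$ to first order.

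This injectivity is the main obstacle, and it is where some non-degeneracy of the framework must enter: a single circle, or a family of concentric circles, is stabilized by a positive-dimensional subgroup, so there $\dim T < 6$ and the stated equivalence would fail. I would therefore argue that for the frameworks under consideration the common infinitesimal stabilizer is trivial, using that a nonzero element of $\mathcal{L}$ fixing three circles in sufficiently general position must vanish, and that the frameworks arising from polyhedra contain such triples. Granting injectivity, $\dim T = 6$; combined with the inclusion $T\subseteq\ker R$ this yields $\dim\ker R \ge 6$, with equality exactly when $G(\mathcal{C})$ is infinitesimally rigid, and the rank--nullity identity then gives $\operatorname{rank} R = 3n-6$ precisely in that case, completing the proof.
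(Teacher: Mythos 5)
Your overall route is the same as the paper's: let $T\subseteq\ker R$ denote the trivial infinitesimal motions, show $\dim T=6$, and then apply rank--nullity. In fact you are more scrupulous than the paper, whose entire justification of the key point is that the M\"obius group is a $6$-dimensional Lie group, so that ``describing [trivial motions] requires six parameters.'' You correctly isolate what is actually being claimed there: $\dim T = 6$ is the injectivity of the evaluation map from the M\"obius Lie algebra $\mathcal{L}$ to velocity vectors, i.e.\ the triviality of the infinitesimal stabilizer of the configuration $\mathcal{C}$.

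However, your way of closing that point is a genuine gap. The lemma carries no general-position hypothesis, and it cannot: in Section~\ref{Section:AAR} it is applied to \emph{every} parameter point of $\mathbb{O}$, including degenerate configurations in which all circles of $\mathcal{C}$ are concentric (equivalently, up to a M\"obius transformation, lie in a common coaxial family --- the invariant circles of a one-parameter elliptic M\"obius group). For such a configuration no triple of circles is in ``sufficiently general position,'' the infinitesimal stabilizer is positive-dimensional, and $\dim T\le 5$, so your argument yields no conclusion about that framework. What saves the lemma there (for $n\ge 3$; recall $n\ge 4$ for polyhedral graphs) is not $\dim T=6$ but the fact that \emph{both} sides of the biconditional are false: for concentric circles every entry of $R$ in the center columns is a multiple of $x_j-x_i$ or $y_j-y_i$, hence zero, so $\mathrm{rank}\,R\le n<3n-6$ when $n\ge 4$ (a small extra computation handles $n=3$), while $\dim\ker R\ge 3n-n=2n>\dim T$, so an infinitesimal flex exists. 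Your proof needs this degenerate case argued separately; otherwise the equivalence is only established under a nondegeneracy hypothesis that would then have to be discharged everywhere the lemma is invoked. One further correction: your assertion that the equivalence itself ``would fail'' for a concentric family is wrong for $n\ge 3$ --- it holds there, by the computation just sketched --- whereas for $n\le 2$ the lemma genuinely is false (two distinct circles joined by an edge form an infinitesimally rigid framework, every infinitesimal motion being trivial, yet $\mathrm{rank}\,R=1\neq 3n-6=0$), so the statement implicitly assumes $n\ge 3$.
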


\section{Inversive Stress of a \textit{c}-Framework}\label{Section:IS}

For the graph $G$ with vertex set $V(G) =\{ 1, \dots ,n \}$, the set $\mathfrak{p} = \{ p_{1}, \dots, p_{n}\}$ of points in $\mathbb{R}^{d}$ indexed by the vertex set $V(G) = \{1, \dots, n\}$ is called a \textit{Euclidean framework} and denoted as $G(\mathfrak{p})$.  A key concept that has found many uses in the rigidity theory of Euclidean frameworks is that of a stress on a framework. Informally, a stress is a real number $\omega_{ij}$ attached to each edge $ij$ of a framework and is used to compute force vectors $F_{ij} = - F_{ji}$ acting on $i$ and $j$ so that a positive stress attracts $i$ and $j$ towards each other, a negative stress repels, and a zero stress results in $F_{ij} = F_{ji} = 0$. For each vertex $i$ a force $F_i$ is computed by the sum $F_i = \sum_{j:ij\in E(G)} F_{ij}$. If for all $i$, $F_i = 0$, the stress is said to be an {\em equilibrium stress}. An equilibrium stress is {\em non-trivial} whenever there exists at least one edge $ij$ such that $F_{ij} \neq 0$. Alexandrov~\cite{Alex2005} proved that the only equilibrium stress on the edge-framework of a convex polyhedron in $\mathbb{R}^3$ is the trivial one if and only if the framework is infinitesimally rigid. Our aim in this section is to develop a similar notion of stress for \textit{c}-frameworks.  

\subsection{An inversive distance edge-vector}
We want to define stresses in such a way that an edge of a \textit{c}-framework with a non-zero stress generates an attractive or repellent force between its adjacent circles. The question is in what direction should the two circles move and how should their radii change? In the Euclidean case, a stressed edge attracts or repels its endpoints along the edge vector. One way to think of an edge vector in Euclidean space is as the direction in which the distance between two points increases or decreases most rapidly. The edge vector from $p_{i}$ to $p_{j}$ in $\mathbb{E}^{3}$ is $p_{j} - p_{i}$. Define the function $f_{i}$ as $f_{i}(p) = ||p - p_{i}||$, the distance function from all points $p\in \mathbb{E}^3$ to $p_{i}$. Then the gradient of $f_{i}$ at $p_{j}$ is equal to the normalized edge vector $\nabla f_{i}(p_{j}) = (p_{j} - p_{i}) / ||p_{j} - p_{i}||$. In physics, forces are usually the negative of the gradient of a potential. When thinking about the edge vector as representing a force, a more physical description might be that one should think of the point $p_{j}$ as exerting a force on the point $p_{i}$ determined by the potential function $f_{j}$, the distance from point $p_{j}$, so that the force is $-\nabla f_{j}(p_{i})$. Of course it is straightforward to see that $-\nabla f_{j}(p_{i}) = \nabla f_{i}(p_{j})$ so it doesn't matter which you use. It seems then that there are two natural candidates for the inversive distance edge-vector between two circles $C_i = (x_i, y_i, r_i)$ and $C_j = (x_j, y_j, r_j)$. Fixing the circle $C_{k}$ and denoting the inversive distance from $C_{k}$ to all other circles $C = (x, y, r)$ as
$$f_{k}(x,y,r) = \text{Inv}(C, C_{k}) = \frac{d_{k}^2 - r^2 - r_{k}^2}{2 r r_{k}},$$ 
where $d_{k}^2 = (x - x_{k})^2 + (y - y_{k})^2$, we might define the inversive distance edge-vector from vertex $i$ to vertex $j$ as either $\nabla f_{i}(x_{j}, y_{j}, r_{j})$ or as $-\nabla f_{j}(x_{i}, y_{i}, r_{i})$. These in general fail to be equal, so we have a choice to make. It turns out that using the more physically inspired expression works and we define the \textit{inversive distance edge-vector from circle $C_{i}$ to circle $C_{j}$} as $V_{ij} =-\nabla f_{j}(x_{i}, y_{i}, r_{i})$.

The partial derivatives of $f_{j}$ with respect to $x$, $y$, and $r$ are
\begin{equation}
\frac{\partial f_{j}}{\partial x}  = \frac{x - x_{j}}{r r_{j}}, \quad
\frac{\partial f_{j}}{\partial y}  = \frac{y - y_{j}}{r r_{j}}, \quad
\frac{\partial f_{j}}{\partial r}  = \frac{r_{j}^{2}- d_{j}^2 - r^2}{2r^2 r_{j}}.
\end{equation}
It follows that
$$V_{ij} = -\nabla f_{j}(x_{i}, y_{i}, r_{i}) = \left(\frac{x_{j} - x_{i}}{r_{i} r_{j}}, \frac{y_{j} - y_{i}}{r_{i} r_{j}},  \frac{r_{i}^{2} + d_{ij}^2 - r_{j}^2}{2r_{i}^{2} r_{j}}\right)^T.$$
Notice that unlike the standard case of Euclidean frameworks, the edge vectors are not negatives of one another since, in general, $V_{ij} \neq - V_{ji}$, this because of the asymmetry between $i$ and $j$ in the denominators of the third coordinates.

\subsection{Inversive stresses}

An {\em inversive stress} $\omega = \{\omega_{ij} : ij \in E(G)\}$ on a \textit{c}-framework $G(\mathcal{C})$ is an assignment of real numbers $\omega_{ij} = \omega_{ji}$ to each edge $ij \in E(G)$. For a fixed vertex $i$, each edge $ij$ adjacent to $i$ exerts a force $\omega_{ij} V_{ij}$ on vertex $i$. If the sum of these forces is 0, i.e., if
\begin{equation}\label{eq:inversiveequilibriumstress}
\sum_{j: ij \in E(G)} \omega_{ij} V_{ij} = 0,	
\end{equation}
 we say that the stress is {\em in equilibrium at vertex $i$}. If the stress is in equilibrium at all the vertices of a framework, then $\omega$ is an {\em equilibrium inversive stress} on $G(\mathcal{C})$. An inversive stress is \textit{non-trivial} if there exists at least one edge $ij$ for which $\omega_{ij} \neq 0$; otherwise it is {\em trivial}. 

The set of linear equations given by Eq.~\ref{eq:inversiveequilibriumstress} for $i = 1, \dots, n$ gives a linear system $V \omega = 0$. The matrix $V$ has dimension $3n \times m$  and its $ij$ column is given as follows:\;its $(3i-2, ij)$, $(3i-1, ij)$, and $(3i, ij)$ entries are the components of $V_{ij}$, its  $(3j-2, ij)$, $(3j-1, ij)$, and $(3j, ij)$ entries are the components of $V_{ji}$, and the remaining entries in the column are zero. Notice that there are exactly six possible non-zero entries in the $ij$th column, which are the components of the edge vectors $V_{ij}$ and $V_{ji}$ in the appropriate rows.

It follows that a non-trivial equilibrium inversive stress exists on $G(\mathcal{C})$ if and only if $\mathrm{dim}(\mathrm{ker} V) \neq 0$. We call $V$ the {\em inversive stress matrix} for $G(\mathcal{C})$. 

\begin{Remark} We may scale each vector $V_{ij}$ by an arbitrary non-zero real number $\lambda_{ij}=\lambda_{ji}$ without affecting the existence of a non-trivial equilibrium stress. To see this, suppose we have an inversive stress $\omega$ on $G(\mathcal{C})$. Choose any set of non-zero numbers $\Lambda = \{\lambda_{ij} : ij \in E(G)\}$, one for each edge $ij$. Now scale each vector $V_{ij}$ by its corresponding number $\lambda_{ij}$ to obtain the {\em scaled edge vector} $\widehat{V}_{ij} = \lambda_{ij} V_{ij}$. Let $\widehat{\omega}_{ij} = \omega_{ij} / \lambda_{ij}$. Then it follows immediately that 
\begin{equation}\label{eq:stressscaling}
\sum_{j: ij \in E(G)} \omega_{ij} V_{ij} = \sum_{j: ij \in E(G)} \widehat{\omega}_{ij} \widehat{V}_{ij},
\end{equation}
and $\omega_{ij} = 0$ if and only if $\widehat{\omega}_{ij} = 0$. In other words, the existence of a non-trivial equilibrium inversive stress on a framework $G(\mathcal{C})$ is independent of any choice of scale on the edge vectors $V_{ij}$. Eq.~\ref{eq:inversiveequilibriumstress} with the scaled edge vectors may be written as 
$\sum_{j: ij \in E(G)} \widehat{\omega}_{ij} \widehat{V}_{ij} = 0$, which gives the linear system $\widehat{V} \widehat{\omega} = 0$. We call the matrix $\widehat{V}$ a {\em scaled inversive stress matrix}.

We can state this rather nicely if we let $\Lambda = \text{diag} [ \omega_{ij} ]$ be the diagonal $m\times m$ matrix of rescale values rather than just the collection of rescale values. Then $\widehat{V} = V\Lambda$ and $\widehat{\omega} = \Lambda^{-1}\omega$, and Eq.~\ref{eq:stressscaling} becomes $V\omega = V\Lambda \Lambda^{-1} \omega = \widehat{V} \widehat{\omega}$. This implies in particular that the kernels $\ker V$ and $\ker\widehat{V}$ are isomorphic via the invertible matrix $\Lambda$. The next lemma summarizes this discussion.

\begin{Lemma}\label{lem:scaledstressequiv}
Let $V$ be the inversive stress matrix for $G(\mathcal{C})$ and let $\widehat{V} = V \Lambda$ be any scaled inversive stress matrix. Then $\dim (\ker V ) = \dim (\ker \widehat{V} )$. In particular, $\mathrm{dim}(\mathrm{ker}\, V) = 0$ if and only if $\mathrm{dim}(\mathrm{ker}\, \widehat{V}) = 0$ so that $G(\mathcal{C} )$ has a non-trivial inversive stress if and only if $\dim (\ker V) \neq 0$ if and only if $\dim(\ker \widehat{V} )\neq 0$.
\end{Lemma}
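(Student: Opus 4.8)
The plan is to exploit the single structural fact that makes the rescaling harmless: the matrix $\Lambda$ is a square diagonal matrix with nonzero diagonal, hence invertible. Everything then reduces to the standard observation that post-multiplying a matrix by an invertible matrix does not change the dimension of its kernel, only reparameterizes it. So no computation with the explicit edge-vectors $V_{ij}$ is needed; the claim is purely linear-algebraic.

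First I would record that $\Lambda = \mathrm{diag}[\lambda_{ij}]$ is an $m\times m$ diagonal matrix whose diagonal entries are the rescaling factors $\lambda_{ij}$, each of which was chosen to be nonzero. Consequently $\Lambda$ is invertible, with $\Lambda^{-1} = \mathrm{diag}[\lambda_{ij}^{-1}]$ again diagonal. This is the only hypothesis actually used.

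Next I would transfer kernels through the relation $\widehat{V} = V\Lambda$. For any column vector $\eta$ one has $\widehat{V}\eta = V\Lambda\eta$, so $\eta \in \ker\widehat{V}$ exactly when $\Lambda\eta \in \ker V$. Thus left-multiplication by $\Lambda$ carries $\ker\widehat{V}$ into $\ker V$; since $\Lambda$ is invertible this restriction is injective, and its inverse $\Lambda^{-1}$ carries $\ker V$ back into $\ker\widehat{V}$, supplying surjectivity. Hence $\Lambda$ restricts to a linear isomorphism from $\ker\widehat{V}$ onto $\ker V$, and isomorphic spaces have equal dimension, giving $\dim(\ker V)=\dim(\ker\widehat{V})$.

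Finally the ``in particular'' assertions follow formally: a kernel has dimension $0$ precisely when it is the trivial subspace, and an isomorphism sends the trivial subspace to the trivial subspace, so $\dim(\ker V)=0$ if and only if $\dim(\ker\widehat{V})=0$. Combined with the fact established in the discussion preceding the statement — that $G(\mathcal{C})$ admits a non-trivial equilibrium inversive stress if and only if $\dim(\ker V)\neq 0$ — this yields the full chain of equivalences. I expect no genuine obstacle here; the only point requiring care is verifying the invertibility of $\Lambda$, i.e. that every rescaling factor $\lambda_{ij}$ is nonzero, after which the argument is entirely routine.
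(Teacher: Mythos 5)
Your proposal is correct and follows essentially the same route as the paper: both arguments rest solely on the observation that $\Lambda$ is an invertible diagonal matrix, so that $\widehat{V} = V\Lambda$ has kernel isomorphic to $\ker V$ via multiplication by $\Lambda$ (the paper phrases this through the correspondence $\widehat{\omega} = \Lambda^{-1}\omega$, which is the same isomorphism). No gap; the argument is purely linear-algebraic, exactly as you anticipated.
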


We use this fact with two different scalings in the next two sections. First, we choose one set of $\lambda_{ij}$-values to connect the concepts of inversive stresses and infinitesimal  rigidity, and then use a different set of $\lambda_{ij}$-values to prove that all \textit{tc}-frameworks are infinitesimally rigid. 
\end{Remark}

\section{Non-trivial Equilibrium Inversive Stresses and Infinitesimal Rigidity of Triangulated \textit{c}-Polyhedra}\label{Section:RS}

Before specializing to circle polyhedra, which are the \textit{c}-frameworks of interest in the remainder of the paper, we make some observations connecting the inversive rigidity matrix $R$ to the inversive stress matrix $V$. For each edge $ij$ of $G$, let $\lambda_{ij} = r_{i}^{2}r_{j}^{2}$ and let $\widehat{V}$ be the scaled inversive stress matrix scaled by the non-zero constants $\Lambda = \{ \lambda_{ij} : ij \in E(V)\}$. A moment's inspection reveals that 
\begin{equation}
\widehat{V}^{T} = R,	
\end{equation}
the transpose of the scaled inversive stress matrix is precisely the inversive rigidity matrix. In particular, the ranks of the matrices $\widehat{V}$ and $R$ agree, and by Lemma~\ref{lem:scaledstressequiv} so too does the rank of $V$. The next lemma now follows from Lemma~\ref{lem:invrigidityrank}.

\begin{Lemma}\label{lem:invrstressrank}
$G(\mathcal{C})$ is infinitesimally rigid if and only if the rank of the inversive stress matrix $V$ is equal to $3n-6$. 
\end{Lemma}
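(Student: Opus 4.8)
The plan is to deduce Lemma~\ref{lem:invrstressrank} from the two facts already assembled immediately before the statement, namely the matrix identity $\widehat{V}^{T} = R$ for the particular scaling $\lambda_{ij} = r_i^2 r_j^2$, together with Lemma~\ref{lem:invrigidityrank} and Lemma~\ref{lem:scaledstressequiv}. The entire argument is a short chain of equalities of ranks, so there is no genuine obstacle; the work is simply to string the pieces together correctly and to verify that the claimed scaling is legitimate.

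First I would confirm that the scalars $\lambda_{ij} = r_i^2 r_j^2$ are admissible as a scaling in the sense of Lemma~\ref{lem:scaledstressequiv}: since each $C_i$ is a genuine circle we have $r_i \neq 0$, so every $\lambda_{ij}$ is a nonzero real number, and the diagonal matrix $\Lambda = \operatorname{diag}[\lambda_{ij}]$ is invertible. Hence $\widehat{V} = V\Lambda$ is a bona fide scaled inversive stress matrix and Lemma~\ref{lem:scaledstressequiv} applies, giving $\dim(\ker V) = \dim(\ker \widehat{V})$. Next I would note that for any matrix, the rank of the matrix equals the rank of its transpose, so from the identity $\widehat{V}^{T} = R$ we obtain
\begin{equation}
\operatorname{rank}(R) = \operatorname{rank}(\widehat{V}^{T}) = \operatorname{rank}(\widehat{V}).
\end{equation}

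It remains to convert the kernel-dimension statement of Lemma~\ref{lem:scaledstressequiv} into a rank statement about $V$ and $\widehat V$. Both $V$ and $\widehat{V} = V\Lambda$ are $3n \times m$ matrices, so by rank--nullity $\operatorname{rank}(V) = m - \dim(\ker V)$ and $\operatorname{rank}(\widehat{V}) = m - \dim(\ker \widehat{V})$; since the two nullities are equal, the two ranks are equal as well, giving $\operatorname{rank}(V) = \operatorname{rank}(\widehat{V}) = \operatorname{rank}(R)$. Finally I would invoke Lemma~\ref{lem:invrigidityrank}, which says $G(\mathcal{C})$ is infinitesimally rigid if and only if $\operatorname{rank}(R) = 3n - 6$. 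Substituting $\operatorname{rank}(R) = \operatorname{rank}(V)$ yields the desired conclusion that $G(\mathcal{C})$ is infinitesimally rigid if and only if $\operatorname{rank}(V) = 3n - 6$, completing the proof.

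Since this is purely a bookkeeping argument, the only point requiring care is making sure the logical direction of Lemma~\ref{lem:scaledstressequiv} is used as an equality of dimensions (which it supplies directly) rather than merely as the weaker ``zero iff zero'' equivalence; the stronger form $\dim(\ker V) = \dim(\ker \widehat{V})$ is exactly what is needed to transfer the precise rank value $3n-6$, and it is available in the statement of that lemma.
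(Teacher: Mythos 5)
Your proposal is correct and follows essentially the same route as the paper: the identity $\widehat{V}^{T} = R$ for the scaling $\lambda_{ij} = r_i^2 r_j^2$, equality of ranks under transposition and under the invertible scaling $\Lambda$ (which the paper, like you, draws from Lemma~\ref{lem:scaledstressequiv}), and then Lemma~\ref{lem:invrigidityrank}. Your version merely makes explicit the bookkeeping (nonvanishing of the $\lambda_{ij}$, rank--nullity on the $3n\times m$ matrices) that the paper leaves implicit.
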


We now specialize to those \textit{c}-frameworks whose graphs are polyhedral graphs. A \textit{c}-framework $P(\mathcal{C})$ is a \textit{\textit{c}-polyhedron} if the graph $P$ is the 1-skeleton of an abstract triangulated polyhedron, equivalently, the $1$-skeleton of a simplicial triangulation of the $2$-sphere.\footnote{This is more general than the development of \textit{c}-polyhedra in~\cite{BBP2018} since we do not require the existence of ortho-circles for each face, as required there, and at the same time less general in that we do require the polyhedra to be triangulated, unlike there. The importance of the triangulation assumption is seen in the lemma following.} 

\begin{Lemma}\label{lem:equiv_inv_rigidity_stress}
A \textit{c}-polyhedron $P(\mathcal{C})$ in the plane is infinitesimally rigid if and only if it has no non-trivial equilibrium inversive stress. 
\end{Lemma}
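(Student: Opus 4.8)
The plan is to leverage the rank criterion already established in Lemma~\ref{lem:invrstressrank}, which tells us that $P(\mathcal{C})$ is infinitesimally rigid if and only if $\mathrm{rank}(V) = 3n-6$. Since $V$ is a $3n \times m$ matrix, the rank-nullity theorem applied to $V$ (acting on the $m$-dimensional space of stresses) gives $\mathrm{rank}(V) + \dim(\ker V) = m$, so infinitesimal rigidity is equivalent to $\dim(\ker V) = m - (3n-6) = m - 3n + 6$. On the other hand, a non-trivial equilibrium inversive stress exists precisely when $\dim(\ker V) \neq 0$. Thus the lemma reduces to the single numerical identity $m = 3n - 6$, which holds exactly because $P$ is the $1$-skeleton of a simplicial triangulation of $\mathbb{S}^2$. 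This is where the triangulation hypothesis (flagged in the footnote) does its work.

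First I would invoke Euler's formula for a triangulation of the $2$-sphere. Writing $V=n$, $E=m$, and $F$ for the number of faces, we have $n - m + F = 2$. Because every face of a simplicial triangulation is a triangle and every edge borders exactly two faces, double-counting edge-face incidences yields $3F = 2m$, i.e.\ $F = \tfrac{2}{3}m$. Substituting into Euler's formula gives $n - m + \tfrac{2}{3}m = 2$, hence $n - \tfrac{1}{3}m = 2$, and therefore $m = 3n - 6$.

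With this identity in hand, the two conditions line up. If $P(\mathcal{C})$ is infinitesimally rigid, then $\mathrm{rank}(V) = 3n-6 = m$ by Lemma~\ref{lem:invrstressrank} and the count above, so $\dim(\ker V) = m - \mathrm{rank}(V) = 0$ and no non-trivial equilibrium inversive stress exists. Conversely, if no non-trivial equilibrium inversive stress exists, then $\dim(\ker V) = 0$, so $\mathrm{rank}(V) = m = 3n-6$, and Lemma~\ref{lem:invrstressrank} gives infinitesimal rigidity. This completes the equivalence.

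I expect no serious obstacle here: the entire argument is a dimension count resting on the already-proven Lemma~\ref{lem:invrstressrank} together with the combinatorics of a sphere triangulation. The only point deserving care is the verification that $m = 3n-6$ genuinely requires the triangulation assumption and would fail for a general polyhedral graph, since for a non-triangulated polyhedron one would have $m < 3n-6$, breaking the square-matrix coincidence that makes $\dim(\ker V) = 0$ equivalent to full rank $3n-6$. This is precisely why the statement is phrased for triangulated \textit{c}-polyhedra.
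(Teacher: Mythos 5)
Your proof is correct and follows essentially the same route as the paper: both arguments reduce the statement to the identity $m = 3n-6$ from Euler's formula, so that the rank criterion (your Lemma~\ref{lem:invrstressrank}, the paper's chain through $R^{T} = \widehat{V}$ and Lemma~\ref{lem:scaledstressequiv}) makes full rank of the stress matrix equivalent to trivial kernel. The paper merely re-derives the rank-of-$V$ criterion inline rather than citing it, so your invocation of Lemma~\ref{lem:invrstressrank} is just a slightly more economical packaging of the identical dimension count.
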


\begin{proof}
Let $m$ be the number of edges and $n$ the number of vertices of $P$. Since $P$ is a triangulated polyhedron, the Euler characteristic gives $m = 3 n - 6$ so that the stress matrix $V$ for $P$ has dimension $3n \times (3 n - 6)$ and the rigidity matrix $R$ has dimension $(3 n - 6) \times 3n$. The result now is immediate. Indeed, $P(\mathcal{C})$ is infinitesimally rigid $\iff$ the rank of $R$ is $3n-6$ $\iff$ the rank of $R^{T} = \widehat{V}$ is $3n-6$ $\iff$ the $m = 3n-6$ columns of $\widehat{V}$ are linearly independent $\iff$ $\dim (\ker \widehat{V}) = 0$ $\iff$ $\dim (\ker {V}) = 0$ $\iff$ there is no non-trivial equilibrium inversive stress.
\end{proof}

Note that this proof works precisely because the number of edges $m$ is equal to $3n-6$ and the M\"obius group is a $6$-dimensional Lie group.

\section{Univalent Tangency packings of the Sphere are Infinitesimally Rigid}\label{sec:utcrigidity}

A \textit{tc-polyhedron} is a \textit{tc}-framework that is also a \textit{c}-polyhedron, so a configuration of circles in the pattern of a triangulated polyhedron whose adjacent circles are tangent. Recall that a \textit{c}-framework is univalent if the open disks bounded by the circles are pairwise disjoint.

\begin{Lemma}\label{lem:utcrigidity}
	Let $P(\mathcal{C})$ be a univalent \textit{tc}-polyhedron in the plane. Then $P(\mathcal{C})$ is infinitesimally rigid. 
\end{Lemma}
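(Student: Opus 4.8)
The plan is to prove that a univalent \textit{tc}-polyhedron admits no non-trivial equilibrium inversive stress, which by Lemma~\ref{lem:equiv_inv_rigidity_stress} is equivalent to infinitesimal rigidity. The key to making this tractable is to exploit the freedom in Lemma~\ref{lem:scaledstressequiv} to replace the inversive stress matrix $V$ by a cleverly scaled version $\widehat{V} = V\Lambda$ whose edge vectors have a transparent geometric meaning. For tangent circles there is a canonical choice: because adjacent circles $C_i$ and $C_j$ are externally tangent, we have $d_{ij} = r_i + r_j$, and one computes that the first two coordinates of $V_{ij}$ point from the center of $C_i$ toward the center of $C_j$ along the line of centers, while the tangency condition simplifies the third (radial) coordinate dramatically. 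I would first rescale each edge so that the scaled edge vector $\widehat{V}_{ij}$ becomes the \emph{unit} vector from $p_i$ to $p_j$ in the plane together with a simple radial term, so that the equilibrium condition $\sum_j \widehat{\omega}_{ij}\widehat{V}_{ij} = 0$ at each vertex splits into a planar-force balance in the $(x,y)$-coordinates and a scalar balance in the $r$-coordinate.

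Having reduced to this geometric form, the heart of the argument is to show that the only way forces of the form $\widehat{\omega}_{ij}(\text{unit vector along the line of centers})$ can balance at every vertex of a univalent tangency packing is for all the $\widehat{\omega}_{ij}$ to vanish. Here I would invoke the univalence hypothesis: the tangency points and lines of centers of a univalent packing assemble into a planar graph drawn with straight edges (the tangency graph realized by its centers), and the faces are genuine triangles because $P$ is triangulated. The strategy I expect to work is the classical Maxwell--Cremona type / Cauchy sign-counting argument adapted from Gluck and Alexandrov: at any vertex $i$, consider the signs of the stresses $\widehat{\omega}_{ij}$ on the edges emanating from $i$, sorted cyclically by the angular order of the neighbors around $p_i$. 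A planar equilibrium of radial forces forces the number of sign changes around each vertex to be either zero or at least four, and a global Euler-characteristic count (summing sign changes over all vertices against the triangular face structure) then yields a contradiction unless every stress is zero. The third coordinate (the radial balance) must be shown to be consistent with, and not to obstruct, this conclusion; with the tangency scaling chosen correctly I expect the radial equations to either decouple or to be automatically satisfied once the planar stresses vanish.

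The main obstacle, and the step I would spend the most care on, is verifying that the scaled tangency edge vectors genuinely reduce to the planar straight-edge configuration that licenses the Cauchy/Maxwell sign-count, including checking that the third (radial) coordinate does not secretly reintroduce a coupling that breaks the purely two-dimensional combinatorial argument. In particular I must confirm that the univalence assumption guarantees a straight-line planar embedding of $P$ via the circle centers with strictly convex (triangular) faces and no crossings, since the sign-counting lemma requires a bona fide planar framework. A secondary subtlety is ensuring that the $6$-dimensional space of trivial stresses (dually, the trivial infinitesimal motions coming from the M\"obius group) is correctly accounted for: the claim is that $\dim(\ker V) = 0$, i.e. \emph{no} non-trivial equilibrium stress at all, so I must make sure the scaling and the combinatorial count rule out every nonzero solution rather than merely reducing their dimension. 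If the direct sign-count proves delicate in the radial coordinate, the fallback is to argue that a univalent tangency packing is the tangency packing guaranteed by the Koebe--Andreev--Thurston theorem for the triangulation $P$, whose infinitesimal rigidity can be imported from the known uniqueness (up to M\"obius) of such packings, thereby forcing $\dim(\ker V)=0$ directly.
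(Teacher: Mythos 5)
Your opening reduction (Lemma~\ref{lem:equiv_inv_rigidity_stress} plus a tangency-adapted rescaling via Lemma~\ref{lem:scaledstressequiv}) matches the paper, which takes $\lambda_{ij} = r_i r_j$ and gets $\widehat{V}_{ij} = (x_j - x_i,\, y_j - y_i,\, r_i + r_j)^{T}$. But the heart of your plan --- splitting the equilibrium condition into a planar force balance plus a radial balance, running the sign count purely in the $(x,y)$-coordinates, and hoping the radial equations ``decouple or are automatically satisfied once the planar stresses vanish'' --- contains a genuine gap, and it is structural, not technical. A triangulated planar framework on the $n$ circle centers has $3n-6$ edges, while planar equilibrium at the vertices imposes only $2n$ linear conditions (three of them redundant), so the space of purely planar self-stresses has dimension at least $n-3 > 0$: no argument confined to the planar components can ever force $\widehat{\omega}=0$. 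This shows up concretely in your vertex lemma: ``zero or at least four sign changes'' permits index-zero vertices (all stresses of one sign \emph{can} balance in the plane --- e.g.\ three edges at $120^{\circ}$ with equal positive stresses), and once index-zero vertices are allowed, the global Euler-characteristic count yields no contradiction. So the step you flagged as the main obstacle is not a verification to be carried out; as proposed, it fails.

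The missing idea is that the radial coordinate is the crux rather than a nuisance, and the paper uses it twice. First, the third component $r_i + r_j$ of every scaled edge vector is strictly positive, so all edge vectors at a vertex lie in an open half-space of $\mathbb{R}^3$; this rules out index-zero (all-one-sign) stressed vertices, which is exactly what no planar argument can do. Second, tangency places each $\widehat{V}_{ij}$ on the cone $z^2 = x^2 + y^2$, and univalence puts the neighboring centers in correct cyclic angular order about $C_i$; at a vertex with exactly two sign changes one separates the $+$ centers from the $-$ centers by two rays, lifts those rays to the cone, and obtains a plane $\Pi$ with all nonzero forces $\widehat{\omega}_{ij}\widehat{V}_{ij}$ strictly on one side, ruling out index two. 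With indices $0$ and $2$ both excluded, Cauchy's Combinatorial Lemma (some vertex must have exactly two sign changes if any has at least two) closes the argument --- no global $\geq 4$ count is needed. Finally, your fallback does not rescue the proposal: uniqueness of the Koebe packing up to M\"obius transformations is a global rigidity statement, and global rigidity does not imply infinitesimal rigidity (the rigidity matrix can drop rank at a configuration that is nonetheless unique); moreover, this lemma is precisely the Dehn-type seed the paper must establish independently before the Main Theorem can invoke the Koebe theorem at all.
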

\begin{proof} In our argument we use Cauchy's Combinatorial Lemma, which Cauchy used in the proof if his celebrated rigidity theorem for convex Euclidean polyhedra. 
	
	\begin{CCL}
 Let $G$ be a graph that triangulates the $2$-sphere. Label some edges with $+$ sign, some with $-$ sign, and some not at all. Consider a vertex $v$ of the graph and a traversal of adjacent edges, say in counter-clockwise order. Ignoring the unlabeled edges, count the number of times a sign change occurs, from $-$ to $+$ or vice versa, in traversing in order the edges adjacent to $v$. It is easy to see that this count must be even. If there is a vertex of $G$ with at least two sign changes, then there must exist a vertex with exactly two sign changes. 		
	\end{CCL}
	
	To verify the lemma, it suffices by Lemma~\ref{lem:equiv_inv_rigidity_stress} to show that  $P$ has no non-trivial equilibrium inversive stresses. We argue by contradiction.
	
	Assume then that $P$ admits a non-trivial equilibrium stress $\omega$. Then $V \omega = 0$ and $\omega$ has a non-zero component for at least one edge. Let $\widehat{V}$ be the scaled inversive stress matrix obtained from $V$ by applying the scale $\lambda_{ij} = r_i r_j$. Then, 
	$$\widehat{V}_{ij} = \lambda_{ij} V_{ij} = \left(x_j - x_i\ , \ y_j - y_i\ ,\ \frac{-r_j^3 + d_{ij}^2 r_j + r_i^2 r_j}{2 r_i r_j}\right)^{T}.$$ 
	Since this is a tangency packing, $d_{ij}^2 = (r_i + r_j)^2$, and a quick calculation shows that the third component of $\widehat{V}$ simplifies to $r_{i} + r_{j}$ so that $\widehat{V}_{ij} = \left( x_j - x_i\, , y_j - y_i\, , r_i + r_j\right)^{T}$. Because $V \omega = 0$, $\widehat{V} \widehat{\omega} = 0$ by Lemma~\ref{lem:scaledstressequiv}, and because $\omega$ has at least one non-zero entry, so too does $\widehat{\omega}$. 
	
	 For any edge $ij \in E(V)$, label $ij$ with a $+$ sign if $\widehat{\omega}_{ij} > 0$ and a $-$ sign if $\widehat{\omega}_{ij} < 0$. We argue that there is no vertex $i$ that is incident to a vertex labeled with a $+$ or a $-$ sign such that all labeled edges incident to $i$ have the same label. To see this, treat $\widehat{V}$ as a vector in $\mathbb{R}^3$ and note that since all radii $r_i > 0$, the $z$-component of each vector $\widehat{V}_{ij}$ is positive. Suppose that for a vertex $i$ some of the edges $ij$ incident to $i$ have a $+$ label, but no incident edge is labeled with a $-$ sign. Then all of the $\widehat{\omega}_{ij}$ values are strictly positive. From this it follows that the $z$-component of the sum $\sum_{j\in Adj(i)} \widehat{\omega} \widehat{V}_{ij}$ is strictly positive, which contradicts that $\widehat{V}\widehat{\omega} = 0$. We conclude that any vertex $i$ that is adjacent to an edge labeled with a $+$ or a $-$ sign must give rise to at least two sign changes as one traverses in order the edges adjacent to $i$. Since $\widehat{\omega}$ has at least one non-zero entry, Cauchy's combinatorial lemma applies, and there exists a vertex $i$ with exactly two sign changes. 
	 
	 Let $v_1, \cdots, v_k$ denote the vertices adjacent to $i$ given in a counter-clockwise rotation. Without loss of generality assume that all the $+$ signs occur at the smaller indices, starting with $i v_1$. Let $i v_{s}$ be the last edge adjacent to $i$ labeled with a $+$ sign so that any of the remaining edges $i v_{s+1}, \dots, iv_{k}$, if labeled, are labeled with a $-$ sign. Consider the corresponding edge vectors $\widehat{V}_{i v_1}, \dots, \widehat{V}_{i v_k}$. Note first that each vector $\widehat{V}_{i v_{j}}$ lies on the $45^{\circ}$ cone $L :z^{2} = x^{2} + y^{2}$. This follows from the facts that circle $C_{i}$ is tangent to circle $C_{v_{j}}$ for $j= 1, \dots, k$, that $(x_{v_{j}} - x_{i})^{2} + (y_{v_{j}} - y_{i})^{2}$ is the squared-distance from the center of $C_{i}$ to that of $C_{v_{j}}$, and that $\widehat{V}_{iv_{j}} = \left( x_{v_{j}} - x_i\, , y_{v_{j}} - y_i\, , r_i + r_{v_{j}}\right)^{T}$.
	 
	 Assuming that circle $C_{i}$ is centered at the origin of the $xy$-plane, the orthogonal projection of the vector $\widehat{V}_{iv_{j}}$ is merely the position vector of the center of circle $C_{v_{j}}$. Since the \textit{tc}-polyhedron $P(\mathcal{C})$ is a univalent tangency circle packing, the centers of the circles $C_{v_{j}}$ appear in the order $1,\dots, k$ as one walks counterclockwise about the origin. It follows that there are two rays in the $xy$-plane, $R_{1}$ and $R_{2}$, such that one open sector of the plane bounded by the union $R_{1} \cup R_{2}$ contains the centers of $C_{v_{1}}, \dots, C_{v_{s}}$ and the complementary open sector contains the centers of the remaining circles $C_{v_{s+1}}, \dots C_{v_{k}}$. Let $\widehat{R}_{1}$ and $\widehat{R}_{2}$ be the orthogonal, vertical lifts of the respective rays $R_{1}$ and $R_{2}$ to the $45^{\circ}$ cone $L$, and let $\Pi$ be the plane in $\mathbb{E}^{3}$ containing these lifted rays. Then it is easy to see that the vectors $\widehat{V}_{i v_1}, \dots, \widehat{V}_{i v_{s}}$ lie in one open half-space bounded by $\Pi$ and the vectors $\widehat{V}_{i v_{s+1}}, \dots, \widehat{V}_{i v_{k}}$ lie in the complementary open half-space. Since $+$ signs may occur only at the edges $iv_{1}, \dots, iv_{s}$ and $-$ signs at edges $iv_{s+1}, \dots ,iv_{k}$ among the edges incident with vertex $i$, all the non-zero vectors in the list $\widehat{\omega}_{i v_{1}}\widehat{V}_{i v_1}, \dots, \widehat{\omega}_{i v_{k}}\widehat{V}_{i v_k}$ lie in the same open half-space bounded by $\Pi$. It follows that the sum $\sum_{j\in Adj(i)}\widehat{\omega}_{ij} \widehat{V}_{ij}$ is non-zero since there are exactly two sign changes about vertex $i$, which implies that not all the vectors $\widehat{\omega}_{i v_{1}}\widehat{V}_{i v_1}, \dots, \widehat{\omega}_{i v_{k}}\widehat{V}_{i v_k}$ are zero. This contradicts the fact that $\widehat{V} \widehat{\omega} =0$ and finishes the proof.
	 \end{proof}

\section{Almost All \textit{c}-Polyhedra are (Infinitesimally) Rigid}\label{Section:AAR}

The arguments of Gluck~\cite{gluck75}, adapted to the setting of \textit{c}-polyhedra, now apply to verify our main theorem. Here are the details.

Let $P$ be the $1$-skeleton of a simplicial triangulation of the $2$-sphere with vertices labeled as $1, \dots, n$. Identify the \textit{c}-polyhedron $P(\mathcal{C})$ where $\mathcal{C}$ is the circle collection $\{C_{1}, \dots, C_{n}\}$ with the point $p \in \mathbb{R}^{3n}$ whose coordinates are $p_{3i-2} = x_{i}$, $p_{3i-1} = y_{i}$, and $p_{3i} = r_{i}$ when $C_{i} = (x_{i}, y_{i}, r_{i})$ under our parameterization of circles. The collection of all \textit{c}-polyhedra in the pattern of $P$ is then parameterized by the points of the open subspace $\mathbb{O}$ of $\mathbb{R}^{3n}$ determined by the inequalities $r_{i} >0$ for $i= 1, \dots ,n$.

By Lemma~\ref{lem:invrigidityrank}, the \textit{c}-polyhedron $P(\mathcal{C})$ is infinitesimally flexible precisely when the rank of the inversive rigidity matrix $R = R_{P(\mathcal{C})}$ is less than $3n-6$. Since the rank of a matrix is the greatest integer $d$ such that some $d\times d$ sub-matrix has non-zero determinant, this occurs when every $(3n-6) \times (3n-6)$ sub-matrix of the rigidity matrix $R$ has zero determinant. By Equations~\ref{EQ:rigiditymatrixentries}, the coefficients of the rigidity matrix $R$ are polynomials in the coordinates of the parameter point $p\in  \mathbb{R}^{3n}$ that represents $P(\mathcal{C})$, and this implies that the determinant of any $(3n-6)\times (3n-6)$ sub-matrix of $R$ is a polynomial in the coordinates of $p$, i.e., in the variables $x_{i}, y_{i}, r_{i}$, $i=1, \dots, n$. It follows that the \textit{c}-polyhedron $P(\mathcal{C})$ is infinitesimally flexible if and only if the point $p$ representing $P(\mathcal{C})$ lies in the real algebraic variety $\mathbb{V}$ of $\mathbb{R}^{3n}$ determined by the polynomials $\det D = 0$, as $D$ ranges over the $(3n-6)\times (3n-6)$ sub-matrices of $R$. 

\begin{MainTheorem}
	The space $\mathbb{O} \setminus \mathbb{V}$ of parameter points corresponding to the infinitesimally rigid \textit{c}-polyhedra in the pattern of $P$ is open and dense in $\mathbb{O}$, and contains those parameter points corresponding to the rigid \textit{c}-polyhedra in the pattern of $P$. 
\end{MainTheorem}

\begin{proof}
	To prove the first statement, that the space $\mathbb{O}\setminus \mathbb{V}$ is open and dense in $\mathbb{O}$, it suffices to show that $\mathbb{V}$ is a proper subvariety of $\mathbb{R}^{3n}$. For this we need but demonstrate the existence of a single \textit{c}-polyhedron $P(\mathcal{C})$ that is infinitesimally rigid. The Koebe Circle Packing Theorem\footnote{This was proved first in Koebe~\cite{pk36}, rediscovered by Thurston~\cite{Thurston:1980}, and now is a part of what is known as the Koebe-Andre'ev-Thurston  Theorem. There are many proofs in the literature. See Bowers~\cite{Bowers2018} for a fairly quick proof and a survey of results relating to the theorem, as well as a bibliography of literature surrounding the theorem.} implies the existence of a univalent, tangency circle packing of the $2$-sphere in the pattern of $P$, which then stereographically projects to give a univalent \textit{tc}-polyhedron $P(\mathcal{C})$ in the plane. Lemma~\ref{lem:utcrigidity} implies that $P(\mathcal{C})$ is infinitesimally rigid. 
	
	For the second part, that the infinitesimal rigidity of a \textit{c}-polyhedron implies its rigidity, we follow almost exactly the proof of Theorem 4.1 of Gluck~\cite{gluck75}. It deserves to be separated out as its own theorem, whose proof will finish off the proof of this Main Theorem.
\end{proof}

\begin{Theorem}
If the \textit{c}-polyhedron $P(\mathcal{C})$ is infinitesimally rigid, it is rigid.	
\end{Theorem}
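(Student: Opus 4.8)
The plan is to mirror Gluck's passage from the infinitesimal (linear-algebraic) statement to the genuine (nonlinear) one, with the inversive-distance map playing the role of the edge-length map and the M\"obius group playing the role of the group of rigid motions. Let $F \colon \mathbb{O} \to \mathbb{R}^{m}$ be the \emph{inversive distance map} sending a parameter point $p$ representing $P(\mathcal{C})$ to the tuple $\big(\mathrm{Inv}(C_{i}, C_{j})\big)_{ij \in E(P)}$ of inversive distances across the edges. Because the rows of the rigidity matrix $R$ were obtained exactly by differentiating these coordinate functions and rescaling the $ij$-th row by the nonzero factor $-r_{i}^{2}r_{j}^{2}$, the differential $dF_{p}$ has the same rank and the same kernel as $R$; in particular its kernel is the space of infinitesimal motions. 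Since $P$ is triangulated, Euler's formula gives $m = 3n - 6$, so the target of $F$ has dimension exactly $3n - 6$. Thus if $P(\mathcal{C})$ is infinitesimally rigid, then by Lemma~\ref{lem:invrigidityrank} the rank of $dF_{p}$ equals $3n - 6 = \dim \mathbb{R}^{m}$, so $dF_{p}$ is surjective and $F$ is a submersion at $p$, with $\ker dF_{p}$ the six-dimensional space of trivial infinitesimal motions.

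Next I would bring in the six-dimensional Lie group $\mathbb{M}$ of M\"obius transformations, which acts smoothly on $\mathbb{O}$ and preserves inversive distances, so that $F$ is constant along each orbit $\mathbb{M}\cdot p$. The derivative at the identity of the orbit map $\mathbb{M} \to \mathbb{O}$, $\mu \mapsto \mu \cdot p$, has image equal to the space of trivial infinitesimal motions, which in the infinitesimally rigid case is six-dimensional; hence the orbit map is an immersion at the identity, the stabilizer of $p$ is discrete, and $\mathbb{M}\cdot p$ is a six-dimensional submanifold through $p$ lying inside the fiber $F^{-1}(F(p))$. On the other side, surjectivity of the differential is an open condition, so $F$ is a submersion on an entire neighborhood $U$ of $p$; by the submersion (constant rank) theorem the fibers $F^{-1}(c)\cap U$ are embedded submanifolds of dimension $3n - (3n-6) = 6$.

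Now I would match the two pictures. Near $p$ the fiber $F^{-1}(F(p))$ is a six-dimensional submanifold whose tangent space at $p$ is $\ker dF_{p}$, and the orbit $\mathbb{M}\cdot p$ is a six-dimensional submanifold contained in this fiber whose tangent space at $p$ is also $\ker dF_{p}$, these agreeing precisely because infinitesimal rigidity forces the trivial infinitesimal motions to exhaust $\ker dF_{p}$. The inclusion of the orbit into the fiber is then an immersion between equidimensional manifolds, hence a local diffeomorphism, so the orbit is open in the fiber near $p$ and there is a neighborhood $W$ of $p$ in $\mathbb{O}$ with
\begin{equation*}
\{\, q \in W : F(q) = F(p)\,\} = (\mathbb{M}\cdot p) \cap W.
\end{equation*}
This is exactly local rigidity: every nearby \textit{c}-polyhedron with the same inversive distances across all edges is the M\"obius image of $P(\mathcal{C})$.

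Finally, given any motion $\mathcal{C}(t)$ of $P(\mathcal{C})$, the corresponding path $p(t)$ satisfies $F(p(t)) = F(p)$ and lies in $W$ for small $t$, hence $p(t) \in \mathbb{M}\cdot p$; choosing a smooth local section of the immersed orbit map produces a smooth family $M_{t}$ with $M_{0} = \mathrm{id}$ and $M_{t}(C_{i}) = C_{i}(t)$, so the motion is trivial. I expect the one genuinely delicate point to be promoting this local conclusion to the entire time-interval of a motion, rather than the linear-to-nonlinear passage itself, which the submersion theorem handles cleanly. This is settled by an open--closed continuation argument along $p(t)$: infinitesimal rigidity is a M\"obius-invariant, open condition, so every point of $\mathbb{M}\cdot p$ is again infinitesimally rigid and the local analysis reapplies, forcing $p(t)$ to remain in the orbit and $M_{t}$ to extend smoothly for all $t$. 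Hence no flex exists and $P(\mathcal{C})$ is rigid.
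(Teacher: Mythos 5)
Your proposal is correct and takes essentially the same route as the paper: both define the inversive-distance map on the parameter space $\mathbb{O}$, observe that its differential is the rigidity matrix $R$ up to an invertible diagonal rescaling, invoke the submersion/implicit function theorem together with the dimension count $m = 3n-6$ to get a six-dimensional fiber near $p$, and then identify that fiber locally with the six-dimensional M\"obius orbit to conclude that every motion is trivial. Your write-up merely fills in details the paper leaves implicit (the orbit-map immersion, the smooth local section producing $M_t$, and the open--closed continuation in $t$), so it is the same argument carried out at a finer level of rigor.
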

\begin{proof}
As in Gluck~\cite{gluck75}, the proof is an application of the implicit function theorem. It works in the present setting because the number of edges $m$ of $P$ is $3n-6$ by an Euler characteristic argument, since $P$ is the $1$-skeleton of a triangulation of $\mathbb{S}^{2}$, and the M\"obius group is a $6$-dimensional Lie group. Here are the details.

The map $f:\mathbb{O} \to \mathbb{R}^{m = 3n-6}$ that assigns the inversive distances between adjacent circles in the \textit{c}-polyhedron $P(\mathcal{C})$ corresponding to $p \in \mathbb{O}$ via
\begin{equation*}
f(p)_{ij} = \text{Inv} ((C_{i},C_{j})) \quad\text{when}\quad ij \in E(P)
\end{equation*}
has derivative
\begin{equation*}
df_{p}	= A_{p} R_{P(\mathcal{C})},
\end{equation*}
where $p$ corresponds to $P(\mathcal{C})$ and $A_{p}$ is the diagonal matrix whose $ij$ diagonal entry is $-1/r_{i}^{2}r_{j}^{2}$ when $ij \in E(P)$. As $A_{p}$ is invertible, we have $P(\mathcal{C})$ is infinitesimally rigid $\iff$ $R=R_{P(\mathcal{C})}$ has rank $3n-6$ $\iff$ $R$ is surjective $\iff$ $df_{p}$ is surjective $\iff$ $p$ is a regular value of $f$. By the implicit function theorem, $f^{-1}f(p)$ is a $6$-dimensional manifold near $p$. A moment's consideration shows that $f^{-1}f(p)$ parameterizes the set of \textit{c}-polyhedra in the pattern of $P$ with the same inversive distances between adjacent circles as those of $P(\mathcal{C})$. Let $\text{M\"ob}(p)$ denote the set of parameter points $q$ that correspond to the \textit{c}-polyhedra in the orbit of $P(\mathcal{C})$ under the action of the M\"obius group on the extended plane. Since the Lie group of M\"obius transformations is $6$-dimensional, $\text{M\"ob}(p)$ is a $6$-dimensional manifold near $p$. Since $\text{M\"ob}(p)\subset f^{-1}f(p)$ and both are $6$-dimensional manifolds near $p$, they coincide in a neighborhood of $p$. But this says precisely that any motion of the \textit{c}-polyhedron $P(\mathcal{C})$ is in fact trivial. Hence $P(\mathcal{C})$ is rigid.
\end{proof}

\bibliographystyle{plain}

\end{document}